\font\sixbb=msbm6
\font\eightbb=msbm8
\font\twelvebb=msbm10 scaled 1095
\newtheorem{theorem}{\bf Theorem}[section]
\newtheorem{proposition}[theorem]{\bf Proposition}
\newtheorem{definition}[theorem]{\bf Definition}
\title{On the vertices of the $d$-dimensional Birkhoff polytope}
\begin{document}
\author{Nathan Linial\thanks{Department of Computer Science, Hebrew University, Jerusalem 91904,
    Israel. e-mail: nati@cs.huji.ac.il~. Supported by ISF and BSF grants.}
  \and {Zur Luria\thanks{Department of Computer Science, Hebrew University, Jerusalem 91904,
    Israel. e-mail: zluria@cs.huji.ac.il~.}}
}

\date{}

\maketitle
\pagestyle{plain}

\begin{abstract}

Let us denote by $\Omega_n$ the {\em Birkhoff polytope} of $n \times n$ doubly-stochastic matrices.
As the Birkhoff-von Neumann theorem famously states, the vertex set of $\Omega_n$ coincides with the set of all $n \times n$ permutation matrices. 
Here we seek a higher-dimensional analog of this basic fact. Let $\Omega^{(2)}_n$ be the polytope which consists of all {\em tristochastic} arrays of order $n$. 
These are $n \times n \times n$ arrays with nonnegative entries in which every \textit{line} sums to $1$. What can be said about $\Omega^{(2)}_n$'s vertex set?
It is well-known that an order-$n$
Latin square may be viewed as a tristochastic array where every line contains
$n-1$ zeros and a single $1$ entry. Indeed, every Latin square of order $n$
is a vertex of $\Omega^{(2)}_n$, but as we show, such vertices constitute only
a vanishingly small subset of $\Omega^{(2)}_n$'s vertex set. 
More concretely, we show that the number of vertices of $\Omega^{(2)}_n$ is at least $(L_n)^{\frac{3}{2}-o(1)}$, where $L_n$ is the number of order $n$ Latin squares.

We also briefly consider similar problems concerning the polytope of $n \times n \times n$ arrays where the entries in every {\em coordinate hyperplane} sum to $1$. Several open questions are presented as well.

\end{abstract}

\section{Introduction}

Let $\Omega_n \subset \mathbb R^{n^2}$ be the Birkhoff polytope,
namely the set of order-$n$ doubly stochastic matrices.
The defining equations and inequalities of $\Omega_n$ are 
$$ \sum_{i=1}^n{x_{i,j}} = 1 \text{ for all } 1 \leq j \leq n  $$
$$ \sum_{j=1}^n{x_{i,j}} = 1 \text{ for all } 1 \leq i \leq n  $$
and 
$$ x_{i,j} \geq 0 \text{ for all } 1 \leq i,j \leq n .$$
 
The vertex set of $\Omega_n$ is determined by the Birkhoff-von Neumann theorem \cite{Birk,Neum}.

\begin{theorem}
The vertex set of $\Omega_n$ coincides with the set of permutation matrices of order $n$.
\label{BirkhoffThm}
\end{theorem}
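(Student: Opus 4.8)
The plan is to establish the two inclusions separately: every permutation matrix of order $n$ is a vertex of $\Omega_n$, and conversely every vertex of $\Omega_n$ is a permutation matrix.

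For the first inclusion I would take a permutation matrix $P$ and suppose $P = \frac12(A+B)$ for some $A,B \in \Omega_n$. Since all entries are nonnegative, $P_{i,j}=0$ forces $A_{i,j}=B_{i,j}=0$, so $A$ and $B$ are supported within the support of $P$. But each row of $P$ has a single nonzero entry, which must then equal the row sum $1$; hence $A=B=P$, so $P$ admits no nontrivial representation as a convex combination of points of $\Omega_n$, i.e. it is a vertex.

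For the converse I would argue by contradiction. Let $A$ be a vertex of $\Omega_n$ that is not a permutation matrix, so some entry of $A$ lies in the open interval $(0,1)$. Form the bipartite support graph $G$ on vertex classes $\{r_1,\dots,r_n\}$ and $\{c_1,\dots,c_n\}$, with an edge $r_ic_j$ exactly when $A_{i,j}>0$. The crux is to show that $G$ contains a cycle. If $G$ were a forest it would have at most $2n-1$ edges and hence a leaf (note $G$ has no isolated vertex, since every line sum is $1$); by the row/column symmetry we may assume the leaf is a row vertex $r_i$ of degree one, and then its unique neighbour $c_j$ satisfies $A_{i,j}=1$, which in turn forces every other entry of column $j$ to vanish, so $c_j$ also has degree one. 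Deleting $r_i$ and $c_j$ leaves a doubly stochastic matrix of order $n-1$ whose support graph is still a forest, so by induction on $n$ it is a permutation matrix, and therefore so is $A$ — contradicting the choice of $A$.

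Finally, given a cycle $C$ in $G$, which has even length because $G$ is bipartite, I would define a perturbation matrix $E$ supported on the edges of $C$ by assigning values $+\varepsilon$ and $-\varepsilon$ alternately around $C$; the even length makes this consistent, and since each row and each column meets $C$ in either no edge or exactly two edges of opposite sign, every line sum of $E$ is zero. Picking $\varepsilon>0$ smaller than every entry of $A$ along $C$ guarantees that $A+E$ and $A-E$ both belong to $\Omega_n$, so $A=\frac12\big((A+E)+(A-E)\big)$ is a nontrivial convex combination — contradicting that $A$ is a vertex. I expect the only step requiring genuine care to be the combinatorial claim that the support graph of a non-permutation doubly stochastic matrix contains a cycle (equivalently, the leaf-peeling induction above); the two perturbation computations are routine.
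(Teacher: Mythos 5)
The paper does not prove Theorem~\ref{BirkhoffThm}: it states the Birkhoff--von Neumann theorem as background and cites \cite{Birk,Neum}, so there is no internal proof to compare against. Your argument is a correct and complete proof by the standard method. The forward inclusion (permutation matrices are vertices) is handled correctly via the observation that any $A,B\in\Omega_n$ averaging to $P$ must be supported inside $\mathrm{supp}(P)$ and hence equal $P$. For the converse, the one step needing real care is the combinatorial lemma that a doubly stochastic matrix whose bipartite support graph is a forest must be a permutation matrix; your leaf-peeling induction is sound, and in particular you correctly invoke the absence of isolated vertices (every line sum is $1$) to guarantee a leaf exists in a forest with $2n$ vertices and at most $2n-1$ edges. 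The cycle-perturbation step is routine and correct. It is worth noting that this perturbation step is precisely the two-dimensional specialization of the argument the paper itself uses in Section~1.2 for $\Omega^{(2)}_n$: there, a bipartite connected component of the support graph $G(A)$, with parts $P,Q$, produces an array $\Delta$ with zero line sums and the decomposition $A=\tfrac12\bigl((A+\tfrac12\Delta)+(A-\tfrac12\Delta)\bigr)$. In your setting an even cycle in the bipartite support graph plays the role of that bipartite component, and the alternating $\pm\varepsilon$ assignment plays the role of the two-coloring of $P$ and $Q$. So while the paper omits the proof of Theorem~\ref{BirkhoffThm}, your argument is entirely in the spirit of the techniques the paper develops for its higher-dimensional analog.
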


We consider here some higher-dimensional analogs of the polytope $\Omega_n$ and ask about their vertex sets in light of Theorem~\ref{BirkhoffThm}.

A \textit{line} in an $n \times n \times n$ array $A$ is the set of entries 
obtained by fixing two indices and letting the third vary from $1$ to $n$.
A line of the form $A(\cdot,j,k)$ is called a column, a line of the form 
$A(i,\cdot,k)$ is a row and a line of the form $A(i,j,\cdot)$ is a shaft.
A \textit{coordinate hyperplane} in $A$ is the $n \times n$ matrix obtained by fixing one index and 
letting the other two vary. Such a hyperplane of the form $A(\cdot,\cdot,k)$
is called a \textit{layer} of $A$. We denote the $k$-th layer of $A$ by $A_k$.\
We denote the support of an array $A$ by $\mbox{supp}(A)$.

 Let $\Omega^{(2)}_n$ be the polytope
of all {\em tristochastic} arrays of order $n$. 
Namely, $n \times n \times n$ arrays with nonnegative entries in which every \textit{line} sums to $1$. Latin squares of order $n$ can be viewed as two-dimensional permutations and it is easily verified that every Latin square of order $n$ is a vertex of $\Omega^{(2)}_n$. Does the natural analog of Theorem~\ref{BirkhoffThm} hold true? As we show (Theorem~\ref{lower_bound}), this is far from the truth. Of the $v=v_n$ vertices of $\Omega^{(2)}_n$ only fewer than $v^{2/3+o(1)}$ correspond to Latin squares.

In section~\ref{variation} we establish a similar phenomenon for a related polytope. Namely, now we consider $n \times n \times n$ arrays of nonnegative reals in which every coordinate hyperplane sums to $1$. Again it is shown that a natural, combinatorially defined set of vertices, are a vanishingly small subset of all vertices of this polytope.

\subsection{Background material}

A Latin square $L$ of order $n$ is an $n \times n$ matrix with entries from
$[n]:=\{1, ... ,n\}$ such that each symbol appears exactly once in every row and column.
Equivalently, it is an $n \times n \times n$ array $A$ of zeros and ones
in which every \textit{line} has exactly one $1$ entry.
The correspondence between the two definitions is this:
$A(i,j,k)=1 \Leftrightarrow L(i,j)=k$. 
We denote the number of order-$n$ Latin squares by $L_n$.

The permanent of an $n \times n$ matrix $A$ is defined as
$$Per(A) = \sum_{\sigma \in \mathbb{S}_n}{\prod_{i=1}^n{a_{i,\sigma(i)}}}.$$

A lower bound on permanents of doubly stochastic matrices was conjectured by
van der Waerden and proved by Falikman and by Egorychev~\cite{Fal, Eg}.

\begin{theorem}
If $A$ is an $n\times n$ doubly stochastic matrix, then
$$ Per(A) \geq \frac{n!}{n^n} .$$
\label{VDWthm}
\end{theorem}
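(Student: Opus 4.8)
The plan is to pin down the minimum of $Per$ over the compact convex polytope $\Omega_n$. Since $Per$ is continuous and $\Omega_n$ compact, this minimum is attained, say at $A\in\Omega_n$; and since the uniform matrix $J_n$ (all entries $1/n$) lies in $\Omega_n$ with $Per(J_n)=n!/n^n$ (each of the $n!$ diagonal products equals $n^{-n}$), it suffices to prove $Per(A)=n!/n^n$, and I will establish the stronger fact that $A=J_n$. I argue by induction on $n$, the case $n=1$ being trivial. First I reduce to the case that $A$ is fully indecomposable: a partly decomposable doubly stochastic matrix is, after permuting rows and columns, a direct sum of doubly stochastic blocks $A_1,\dots,A_r$ of sizes $k_1,\dots,k_r$ with $r\ge2$ and all $k_i<n$, so $Per(A)=\prod_i Per(A_i)\ge\prod_i k_i!/k_i^{k_i}$ by induction, and $\prod_i k_i!/k_i^{k_i}>n!/n^n=Per(J_n)$ by a direct computation; thus such a matrix is not a minimizer, and the minimizer $A$ is fully indecomposable.

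Next I extract the first-order structure of the minimizer. Writing $A(i\,|\,j)$ for the matrix obtained from $A$ by deleting row $i$ and column $j$, Laplace expansion gives $\partial Per(A)/\partial a_{ij}=Per\bigl(A(i\,|\,j)\bigr)$. A perturbation argument --- adding $\pm\varepsilon$ at the four corners of a suitable $2\times2$ submatrix of $A$, which preserves all line sums and, for small $\varepsilon$, all nonnegativity constraints --- shows that a minimizer satisfies $Per\bigl(A(i\,|\,j)\bigr)\ge Per(A)$ for all $i,j$ (London's lemma); this is the one step requiring genuine, though elementary, care. Feeding it into $Per(A)=\sum_j a_{ij}Per\bigl(A(i\,|\,j)\bigr)\ge Per(A)\sum_j a_{ij}=Per(A)$ forces equality, so $Per\bigl(A(i\,|\,j)\bigr)=Per(A)$ whenever $a_{ij}>0$, while $Per\bigl(A(i\,|\,j)\bigr)\ge Per(A)>0$ always (a doubly stochastic matrix has positive permanent, by Theorem~\ref{BirkhoffThm}).

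The hard part will be to upgrade this to $A=J_n$. I regard $Per$ as the symmetric multilinear form $Per(w^{(1)},\dots,w^{(n)})$ in the $n$ columns and invoke the Alexandrov--Fenchel-type inequality for permanents of nonnegative vectors,
\[
Per\bigl(u,w,w^{(3)},\dots,w^{(n)}\bigr)^2\ \ge\ Per\bigl(u,u,w^{(3)},\dots,w^{(n)}\bigr)\cdot Per\bigl(w,w,w^{(3)},\dots,w^{(n)}\bigr),
\]
together with its equality characterization: when $w^{(3)},\dots,w^{(n)}$ are suitably nondegenerate, equality forces $u$ and $w$ to be proportional (equivalently, a symmetric bilinear form built from $w^{(3)},\dots,w^{(n)}$ has exactly one positive eigenvalue). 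Taking $u,w$ to be two columns of $A$ and $w^{(3)},\dots,w^{(n)}$ the remaining columns, and expanding the ``squared'' terms $Per(u,u,\dots)$ back into $(n-1)$-minors of $A$, the bound $Per\bigl(A(i\,|\,j)\bigr)\ge Per(A)$ from the previous paragraph forces all the resulting inequalities to be equalities, hence equality in the displayed inequality for every pair of columns; the equality case then shows that all columns of $A$ are pairwise proportional, so --- being columns of a doubly stochastic matrix --- they are all equal to $\tfrac1n\mathbf 1$, i.e.\ $A=J_n$ and $Per(A)=n!/n^n$. Making the equality case applicable requires in addition ruling out zero entries of $A$, which is interwoven with this analysis. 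The genuine obstacle --- indeed the entire content of the theorem --- is the Alexandrov--Fenchel inequality above together with its equality case; this is exactly what Egorychev proved via Alexandrov's theory of mixed discriminants, and what Falikman proved independently by a direct variational argument.

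Alternatively, I could follow Gurvits's shorter route. Attach to $A$ the polynomial $p_A(x)=\prod_{i=1}^n\bigl(\sum_{j=1}^n a_{ij}x_j\bigr)$, a product of linear forms with nonnegative coefficients, hence real stable and homogeneous of degree $n$, with $\partial_{x_1}\!\cdots\partial_{x_n}p_A=Per(A)$. Weighted AM--GM and the column-sum conditions give $p_A(x)\ge x_1\cdots x_n$ for all $x>0$, with equality at $x=(1,\dots,1)$, so $\mathrm{cap}(p_A):=\inf_{x>0}p_A(x)/(x_1\cdots x_n)=1$. A capacity-loss estimate controlling how $\mathrm{cap}$ drops when one applies a single $\partial_{x_k}$ to a real stable polynomial, iterated over $k=1,\dots,n$, then gives $Per(A)\ge\mathrm{cap}(p_A)\prod_{k=2}^n(\tfrac{k-1}{k})^{k-1}$, and since $\mathrm{cap}(p_A)=1$ and the product telescopes to $n!/n^n$ we are done. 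Here the obstacle migrates to a single sharp one-variable lemma: for a real-rooted polynomial $\phi$ of degree $\le k$ with nonnegative coefficients, $\phi'(0)\ge(\tfrac{k-1}{k})^{k-1}\inf_{t>0}\phi(t)/t$.
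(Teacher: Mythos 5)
The paper does not prove Theorem~\ref{VDWthm}: it is the Falikman--Egorychev resolution of the van der Waerden permanent conjecture, stated with citations to \cite{Fal,Eg} and used only as a black box (together with Br\`{e}gman's bound) in the van Lint--Wilson derivation of Theorem~\ref{counting_Latin_squares}. So there is no ``paper's own proof'' to compare against.

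Your first sketch --- reduction to a fully indecomposable minimizer via $\prod_i k_i!/k_i^{k_i}>n!/n^n$, London's lemma $Per(A(i\,|\,j))\ge Per(A)$ by the $2\times 2$ perturbation, then the permanental Alexandrov--Fenchel inequality with its equality case forcing all columns proportional and hence $A=J_n$ --- is an accurate outline of the Egorychev route that the paper cites, and the Gurvits alternative is also correctly summarized, including the telescoping $\prod_{k=2}^n\bigl((k-1)/k\bigr)^{k-1}=n!/n^n$. But note that in both routes the step you yourself flag as the ``genuine obstacle'' (the Alexandrov--Fenchel inequality for permanents together with its equality characterization, or Gurvits's capacity-loss lemma $\phi'(0)\ge\bigl(\tfrac{k-1}{k}\bigr)^{k-1}\inf_{t>0}\phi(t)/t$ for real-rooted $\phi$) is precisely where the theorem lives, and you defer it to the literature rather than proving it; the equality-case nondegeneracy and the zero-entry issue are likewise acknowledged but not resolved. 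As a roadmap this is honest and well-calibrated about where the depth is concentrated; as a self-contained proof it stops exactly where the real work begins, which is also why the paper cites the result rather than reproving it.
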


An upper bound on the permanent of zero/one matrices was conjectured by Minc 
and proved by Br\`{e}gman \cite{Br73}.

\begin{theorem} 
Let $A$ be an $n\times n$ matrix of zeros and ones with 
$r_i$ ones in the $i$-th row. 
Then 
$$ Per(A) \leq \prod_{i=1}^n{(r_i!)^{1/r_i}} . $$
\label{BregmanThm}
\end{theorem}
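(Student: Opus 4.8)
\textbf{The plan} is to run the entropy-counting argument (due to Radhakrishnan, streamlining Schrijver's proof of the Minc conjecture). If $Per(A)=0$ the inequality is trivial, so assume $Per(A)\ge 1$; in particular every row sum $r_i$ is positive. View $A$ as the bipartite adjacency matrix of a graph $G$ with row-vertices $[n]$ and column-vertices $[n]$, where $i\sim j$ iff $A(i,j)=1$. Then $Per(A)$ is exactly the number of perfect matchings of $G$, i.e. the number of $\sigma\in\mathbb S_n$ with $A(i,\sigma(i))=1$ for all $i$. Throughout, fix any base for $\log$; the choice is immaterial since we exponentiate at the end. The tools are the elementary facts that a random variable $X$ uniform on a finite set has $H(X)=\log|\mathrm{supp}(X)|$, the chain rule $H(X_1,\dots,X_n)=\sum_i H(X_i\mid X_1,\dots,X_{i-1})$, and $H(X\mid Y)\le\mathbb E_y\log|\mathrm{supp}(X\mid Y{=}y)|$.

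\textbf{Step 1 (entropy of a random matching).} Let $\sigma$ be uniform over the $Per(A)$ valid permutations, so $H(\sigma)=\log Per(A)$. I will upper bound $H(\sigma)$ by expanding it with the chain rule, but in a \emph{randomly chosen order} of the coordinates, so that the resulting bound is symmetric in the rows. Let $\pi$ be a uniformly random permutation of $[n]$, independent of $\sigma$; regard $\pi$ as the order in which the values $\sigma(1),\dots,\sigma(n)$ are revealed. For a fixed matching $\sigma$ and a fixed order $\pi$, let $k_i(\sigma,\pi)$ be the number of column-vertices adjacent to $i$ that are still unmatched at the moment row $i$ is revealed; note $1\le k_i\le r_i$ and that $\sigma(i)$ itself is always one of these. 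For each fixed $\pi$ the chain rule gives $H(\sigma)=\sum_{i=1}^n H\big(\sigma(i)\mid \sigma(i'):\pi^{-1}(i')<\pi^{-1}(i)\big)$, and since, conditioned on any revealed history, $\sigma(i)$ is supported on the at most $k_i$ unmatched neighbours of $i$, each term is at most $\mathbb E_\sigma[\log k_i(\sigma,\pi)]$. Averaging over $\pi$ yields
$$\log Per(A)=H(\sigma)\le\sum_{i=1}^n \mathbb E_{\sigma,\pi}\big[\log k_i(\sigma,\pi)\big].$$

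\textbf{Step 2 (distribution of $k_i$).} Fix a matching $\sigma$ and a row $i$. The $r_i$ columns adjacent to $i$ are matched by $\sigma$ to $r_i$ \emph{distinct} rows, one of which is $i$ itself; call this set of rows $S_i$, so $|S_i|=r_i$ and $i\in S_i$. A column adjacent to $i$ is unmatched when $i$ is revealed precisely when its $\sigma$-partner row appears at or after $i$ in $\pi$; hence $k_i(\sigma,\pi)$ is the number of rows of $S_i$ appearing at or after $i$ in $\pi$. Since $\pi$ restricted to $S_i$ is a uniformly random ordering of $S_i$, the rank of $i$ within $S_i$ is uniform on $\{1,\dots,r_i\}$, and therefore $k_i$ is uniform on $\{1,\dots,r_i\}$ — crucially, with a distribution that does not depend on which $\sigma$ was chosen. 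Consequently
$$\mathbb E_{\sigma,\pi}\big[\log k_i(\sigma,\pi)\big]=\frac1{r_i}\sum_{t=1}^{r_i}\log t=\frac{\log(r_i!)}{r_i}=\log\big((r_i!)^{1/r_i}\big).$$
Summing over $i$ and exponentiating gives $Per(A)\le\prod_{i=1}^n (r_i!)^{1/r_i}$.

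\textbf{Main obstacle.} The single genuinely clever point is the combination in Steps 1--2: introducing an independent random revelation order so that the per-row conditional-entropy bound becomes symmetric, and then recognizing that the count $k_i$ of columns still free for row $i$ depends only on the relative $\pi$-order of the $r_i$ rows to which $i$'s neighbours are matched — an order which is uniform, hence makes $k_i$ uniform on $\{1,\dots,r_i\}$ regardless of $\sigma$. Everything else is bookkeeping with the standard entropy inequalities and the independence of $\pi$ and $\sigma$; one should also double-check the degenerate reduction (if $Per(A)=0$, or if some $r_i$ were $0$, the statement holds trivially since the right-hand side is a product of terms $\ge 1$).
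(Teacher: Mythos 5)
The paper itself does not prove this theorem; it is stated in the background subsection as a known result of Br\`egman \cite{Br73} and is then used as a black box in the van Lint--Wilson enumeration of Latin squares, so there is no in-paper argument to compare against. Your write-up is a correct and complete account of Radhakrishnan's entropy proof of the Minc--Br\`egman bound. The key checks all go through: the conditional support of $\sigma(i)$ given the revealed history is contained in (possibly a proper subset of) the set of still-unmatched neighbours of $i$, which is exactly what the inequality $H(X\mid Y)\le \mathbb E_y\log|\mathrm{supp}(X\mid Y{=}y)|$ needs; for fixed $\sigma$ the restriction of a uniform random $\pi$ to the set $S_i=\{\sigma^{-1}(j): j\sim i\}$ (which contains $i$) is a uniform ordering of $S_i$, hence $k_i$ is uniform on $\{1,\dots,r_i\}$; and since that distribution does not depend on $\sigma$, averaging over both $\sigma$ and $\pi$ gives precisely $\log(r_i!)/r_i$ for each row. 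The degenerate reductions ($Per(A)=0$, or a zero row sum) are also handled correctly. For context, Br\`egman's original proof is an inductive argument using a convexity/rearrangement lemma, and Schrijver later gave a short double-counting proof; the entropy formulation you present is the standard modern streamlining of Schrijver's argument.
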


The following argument of van Lint and Wilson \cite{VL+W} utilizes
these two bounds to derive an estimate for $L_n$ by constructing a Latin square
$A$ and bounding the number of ways to do this.
Consider the $n \times n \times n$ zero-one array representation of a Latin square
layer by layer.
Each layer is a permutation matrix, so that there are $n!$ choices for the
first layer. Having already specified $k-1$ layers, the number of choices for the 
$k$-th layer can be expressed as the permanent of $B$,
a zero/one matrix where $b_{ij}=1$ iff $a_{ijt}=0$ for all $k > t$.
Using the above upper and lower bounds on $per(B)$ it follows that

\begin{theorem}
$$ L_n = \left((1+o(1))\frac{n}{e^2}\right)^{n^2} .$$
 \label{counting_Latin_squares}
\end{theorem}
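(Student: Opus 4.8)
The plan is to carry out the van Lint--Wilson scheme sketched above in full and then take $n^2$-th roots. I would fill the zero-one array $A$ one layer at a time: the layers $A_1,\dots,A_n$ are chosen successively, each being a permutation matrix, subject to the requirement that their supports be pairwise disjoint (this is precisely what makes $A$ a Latin square). Given a legal choice of $A_1,\dots,A_{k-1}$, let $B^{(k)}$ be the zero-one matrix with $b^{(k)}_{ij}=1$ iff $a_{ijt}=0$ for all $t<k$. Since the complement of $B^{(k)}$ is a disjoint union of $k-1$ permutation matrices, every row and every column of $B^{(k)}$ contains exactly $r:=n-k+1$ ones, regardless of the earlier choices. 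The number of admissible layers $A_k$ equals $\mathrm{Per}(B^{(k)})$, so multiplying the per-level bounds below (which depend only on $r$) over $k=1,\dots,n$ yields two-sided estimates on $L_n$ directly.

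For the lower bound, $\tfrac{1}{r}B^{(k)}$ is doubly stochastic, so Theorem~\ref{VDWthm} gives $\mathrm{Per}(B^{(k)})\ge r^n\cdot n!/n^n$; multiplying over $k$ (so that $r$ ranges over $1,\dots,n$) gives $L_n\ge (n!)^n\cdot(n!/n^n)^n=(n!)^{2n}/n^{n^2}$. For the upper bound, Theorem~\ref{BregmanThm} applied to $B^{(k)}$ yields $\mathrm{Per}(B^{(k)})\le (r!)^{n/r}$, hence $L_n\le\prod_{r=1}^n (r!)^{n/r}$.

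It remains to verify that both sides equal $\bigl((1+o(1))\,n/e^2\bigr)^{n^2}$. The lower bound follows at once from Stirling's formula: $(n!)^{2n}/n^{n^2}=\Theta\!\bigl((2\pi n)^n\bigr)\cdot (n/e^2)^{n^2}$, whose $n^2$-th root is $(1+o(1))\,n/e^2$ since $(2\pi n)^{1/n}\to1$. For the upper bound I would take logarithms and use $\ln(r!)=r\ln r-r+O(\ln r)$:
\begin{equation}
\sum_{r=1}^n\frac{\ln(r!)}{r}=\sum_{r=1}^n(\ln r-1)+O\bigl((\ln n)^2\bigr)=\ln(n!)-n+O\bigl((\ln n)^2\bigr)=n\ln n-2n+O\bigl((\ln n)^2\bigr),
\end{equation}
so $\ln\prod_{r=1}^n (r!)^{n/r}=n^2\ln(n/e^2)+O\bigl(n(\ln n)^2\bigr)=n^2\ln(n/e^2)+o(n^2)$, as required.

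As for the difficulty: the real content is supplied entirely by the two cited permanent inequalities (Theorems~\ref{VDWthm} and~\ref{BregmanThm}), and everything that remains is elementary asymptotics with the error terms kept under control. The one point deserving a careful sentence is the uniformity observation --- that the bounds on $\mathrm{Per}(B^{(k)})$ hold for \emph{every} legal partial filling and depend on $k$ alone --- since this is exactly what lets one replace the branching count over partial fillings by the clean telescoping product.
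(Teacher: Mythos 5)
Your proposal is correct and follows exactly the van Lint--Wilson layer-by-layer argument that the paper sketches: express the number of choices for the $k$-th layer as $\mathrm{Per}(B^{(k)})$, apply van der Waerden for the lower bound and Br\`egman for the upper bound, and then carry out the Stirling asymptotics. The paper only outlines this; you supply the missing details correctly, including the key uniformity observation that $B^{(k)}$ is always $(n-k+1)$-regular regardless of the partial filling.
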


\subsection{A higher dimensional Birkhoff polytope}

\subsubsection{Definitions and motivating example}\label{basic}
Let $\Omega^{(d)}_n$ be the set of $[n]^{d+1}$ nonnegative arrays such that 
the sum of each line is $1$. 
Thus, $\Omega^{(1)}_n = \Omega_n$, the set of order-$n$ doubly stochastic matrices.
Likewise, we call a member of $\Omega^{(d)}_n$
a $(d+1)$-stochastic array. Maintaining the analogy, we
let $S^{(d)}_n$ be the set of $[n]^{d+1}$ arrays of zeros and ones with a 
single one in each line. 
In other words, $S^{(d)}_n$ consists of all $(d+1)$-stochastic arrays 
all of whose entries are zero or one. 
Thus, $S^{(1)}_n$ is the set of order $n$ permutation matrices and 
$S^{(2)}_n$ coincides with the set of order-$n$ Latin squares. 
Members of $S^{(d)}_n$ are called $d$-permutations.

We turn to investigate the vertex set of $\Omega^{(d)}_n$. It is easily verified
that every member of $S^{(d)}_n$ is a vertex of $\Omega^{(d)}_n$.
However, as we show here $\Omega^{(d)}_n$ can have numerous additional vertices.
We find it instructive to present the smallest such example. 
Namely, the following array $A$ is a vertex of $\Omega^{(2)}_3$.
\[ A_1=\left[\begin{array}{ccc}
1 & 0 & 0 \\
0 & \frac{1}{2} & \frac{1}{2} \\
0 & \frac{1}{2} & \frac{1}{2} \\
\end{array}\right]
A_2=\left[\begin{array}{ccc}
0 & \frac{1}{2} & \frac{1}{2} \\
\frac{1}{2} & \frac{1}{2} &0 \\
\frac{1}{2} & 0 & \frac{1}{2} \\
\end{array}\right]
A_3=\left[\begin{array}{ccc}
0 & \frac{1}{2} & \frac{1}{2} \\
\frac{1}{2} & 0 & \frac{1}{2} \\
\frac{1}{2} & \frac{1}{2} & 0 \\
\end{array}\right]
\]

To see that $A$ is indeed a vertex, assume to the contrary that 
$A = \alpha B + (1-\alpha) C$ for some $0 < \alpha < 1$ and $B \neq C$ in $\Omega^{(2)}_3$. 
If $A(i,j,k)$ is $0$ or $1$, then necessarily $A(i,j,k)=B(i,j,k)$. 
So wherever $A(i,j,k)\neq B(i,j,k)$, there holds $A(i,j,k) = \frac{1}{2}$.

Consider the graph $G=G(A)$ whose vertices are the $\frac 12$ entries of $A$, where two vertices are adjacent iff 
they are on the same line. Since
$A$ is tristochastic, it follows that $B(i,j,k) + B(i',j',k') = 1$ for every two neighbors
$(i,j,k)$ and $(i',j',k')$ in $G$.
Specifically, if $B(i,j,k)=\frac{1}{2}+\epsilon$, then $B(i',j',k') =\frac{1}{2}-\epsilon$. 
Consequently, the connected component of $G$ which contains the vertices $(i,j,k)$ 
and $(i',j',k')$ is bipartite. 
The color of a vertex is determined according to whether the $B$ entry is $\frac 12 \pm \epsilon$. However, it is easy to verify that $G$ is connected and not bipartite, which proves our claim.

\subsection{A scheme for constructing vertices}

The above example suggests a construction for vertices of $\Omega^{(2)}_n$. Let
$A$ be an order-$n$ tristochastic array whose support consists of exactly two $\frac{1}{2}$ entries in each line. The graph $G = G(A)$ defined as above is 3-regular and has $2 n^2$ vertices. As we now show, $A$ is a vertex of $\Omega^{(2)}_n$ iff no connected component of $G$ is bipartite.

Indeed, suppose that $G$ has a bipartite connected component with parts $P$ and $Q$. Let $\Delta$ be the $[n]^3$ array with $\pm 1$ entries at the elements of $P, Q$ respectively and $0$ everywhere else. 
Note that every line of $\Delta$ sums to zero. To see that $A$ is not a vertex, note that $A = \frac{X+Y}{2}$, where $X, Y = A \pm \frac{1}{2}\Delta$ are clearly tristochastic.

Conversely, suppose that $A = \alpha B + (1-\alpha) C$ 
with $1 > \alpha > 0$ and $B \ne C$ in $\Omega^{(2)}_n$ is not a vertex.
The same consideration that worked for the above example shows that the relevant
component of $G$ is bipartite.

This discussion suggests that we construct $A$ so that no connected component 
of $G(A)$ is bipartite. This shouldn't be too hard, since $G$ is 3-regular. 
Indeed, we suspect (but we still cannot show) that a typical 
tristochastic array with two $\frac{1}{2}$'s in each line is a vertex. 
This idea still yields the following lower bound on the number of 
vertices of $\Omega^{(2)}_n$.

\begin{theorem}
The polytope $\Omega^{(2)}_n$ has at least $L_n^{\frac{3}{2}-o(1)}$ vertices.
\label{lower_bound}
\end{theorem}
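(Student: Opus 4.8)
The plan is to construct many tristochastic arrays $A$ of the special form from the previous section---exactly two $\tfrac12$ entries in each line---and to show that a large fraction of them are vertices, i.e.\ that $G(A)$ has no bipartite component. The natural source of such arrays is pairs of Latin squares: given two order-$n$ Latin squares $L, M$ that are \emph{disjoint} (meaning $L(i,j) \ne M(i,j)$ for every cell $(i,j)$), the array $A = \tfrac12(\chi_L + \chi_M)$, where $\chi_L$ denotes the zero/one array representation, is tristochastic with exactly two $\tfrac12$'s in each line. So the first step is to count disjoint pairs of Latin squares. Here I would use the van Lint--Wilson layer-by-layer argument together with the van der Waerden and Br\`egman bounds (Theorems~\ref{VDWthm} and~\ref{BregmanThm}): build $L$ first in $L_n$ ways, then build $M$ layer by layer subject to being disjoint from $L$. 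Forbidding the single cell used by $L$ in each line only changes the relevant permanents by lower-order factors, so the number of disjoint pairs is $L_n^{2-o(1)}$. (One must also check that distinct disjoint pairs $\{L,M\}$ give distinct arrays $A$, which is immediate since $L, M$ are recovered as the two ``halves'' of $\mathrm{supp}(A)$.) This already gives $L_n^{2-o(1)}$ candidate arrays, but not all are vertices.

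The second, and main, step is to control the bipartite components of $G(A)$. The graph $G(A)$ on the $2n^2$ support entries is $3$-regular: each $\tfrac12$ entry lies in three lines (its row, column and shaft), and in each such line there is exactly one other $\tfrac12$ entry. I do \emph{not} expect to show that a \emph{typical} disjoint pair gives a vertex---the authors explicitly say they cannot---so instead the idea is to fix a small ``gadget'' part of the construction that destroys bipartiteness while leaving enough freedom elsewhere to retain $L_n^{3/2-o(1)}$ arrays. Concretely, I would reserve a bounded number of coordinates (say the first few layers, or a bounded principal sub-box) and plant there a fixed sub-configuration that forces an odd cycle through every support vertex that can reach it, or more simply forces $G(A)$ to be connected and non-bipartite on that block; then fill in the rest of the array using roughly $\sqrt{L_n}$-worth of free choices. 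The arithmetic to aim for: we want the number of surviving arrays to be $L_n^{3/2-o(1)} = L_n^{2-o(1)} \cdot L_n^{-1/2+o(1)}$, so we can afford to pay a factor $L_n^{-1/2}$, i.e.\ to constrain ``half the square's worth'' of entropy in exchange for guaranteeing non-bipartiteness. One clean way to realize this is to let the bottom $n/2$ (or suitably chosen constant fraction, optimized to the exponent $3/2$) layers be determined by a rigid, provably-non-bipartite template, and let the remaining layers range over disjoint Latin-rectangle completions, whose count is $L_n^{1/2+o(1)}$ by the same permanent estimates; then argue that adding the rigid block can only merge components and never creates a bipartite component because the template block already contains an odd closed walk meeting every component.

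The hard part will be exactly this last point: showing that the free part of the construction cannot ``accidentally'' re-bipartize the graph, i.e.\ that once the rigid template guarantees an odd cycle in $G(A)$, every connected component of $G(A)$ still contains one. This requires understanding how components of $G(A)$ can split or merge as we vary the free layers, and it is where the $3$-regularity and the line structure must be used carefully---for instance, by ensuring the template touches every line in the free region in a way that propagates the odd cycle. An alternative route, which may be technically cleaner, is a ``local repair'' argument: start from $L_n^{2-o(1)}$ disjoint pairs, and whenever $G(A)$ has a bipartite component, perform a bounded-size local modification (a small intercalate-type switch supported on $O(1)$ lines) that kills that component's bipartiteness; then bound the number of arrays that map to the same repaired array. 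For the exponent $3/2$ to come out, the repair step should be chargeable to at most $L_n^{1/2+o(1)}$ preimages. I would first attempt the rigid-template version, since it makes the $-1/2$ loss transparent and sidesteps worrying about how many bad components a generic $A$ has; the switch-based version is the fallback if the template turns out to be awkward to analyze.
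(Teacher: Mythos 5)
Your high-level plan---restrict to arrays with exactly two $\tfrac12$'s per line, plant a small gadget that kills bipartiteness, and accept a loss of roughly $L_n^{1/2}$ against a pool of roughly $L_n^2$ candidates---is in the same spirit as the paper's proof. Your observation that disjoint Latin square pairs give such arrays is correct, as is the $3$-regularity of $G(A)$. But the proposal has two genuine gaps that are not details to be filled in later.

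First, the accounting in your ``rigid template'' realization does not produce the exponent $3/2$. If you fix the bottom $n/2$ layers outright and let only the top $n/2$ layers vary, you get at most as many arrays as there are admissible tops, which you yourself estimate as $L_n^{1/2+o(1)}$; that is nowhere near $L_n^{3/2}$. More fundamentally, a rigid bottom block cannot be chosen independently of the top: which shafts are already saturated, and hence which cells are available in the bottom layers, is dictated by the support of the top half, so the ``template'' would have to adapt to the top rather than being a fixed configuration. The paper resolves this differently: it \emph{constrains} (rather than fixes) the top $n/2$ layers, requiring them to come from a Hamiltonian double Latin square, which still leaves $L_n^{1/2-o(1)}$ choices; it then builds only \emph{two} adaptive gadget layers (contributing a negligible factor); and it lets the remaining $n/2-2$ layers range over $2$-factors of the residual $(n-4)$-regular ``shadow'' bipartite graph, which contributes $L_n^{1-o(1)}$ by the van~der~Waerden/Br\`egman permanent estimates. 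The product is $L_n^{1/2}\cdot L_n^{1} = L_n^{3/2-o(1)}$.

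Second, you address non-bipartiteness but not connectivity of $G(A)$, and the criterion requires that \emph{no} connected component be bipartite, so a gadget in one component does not help any other component. This is exactly why the paper uses Hamiltonian double Latin squares: it forces each of the top $n/2$ layers to be a single $2n$-cycle rather than a union of shorter cycles, and layer $n/2+1$ is then built with a partial transversal that touches every one of those cycles, merging them into one component. Once that is done, every shaft already carries a $\tfrac12$ in the top half, so all support elements in the remaining layers are automatically connected through their shafts---this is the structural observation that lets the bottom half be completely free. Your sketch replaces this with the hope that a gadget ``meets every component,'' which is exactly the unproven claim. Your fallback ``local repair'' idea is appealing, but as stated it would require both a repair that never creates new bipartite components elsewhere and a preimage bound of $L_n^{1/2+o(1)}$, and neither is argued. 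As written, the proposal identifies the right target and several right ingredients, but it does not close either the counting or the connectivity gap.
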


\section{Proof of theorem \ref{lower_bound}}

\subsection{The construction}

Let $I=(i_1, i_2,\ldots,i_n)$ and $J=(j_1, j_2,\ldots,j_n)$ be two permutations of $[n]$. We let $H(I, J) := \{(i_1,j_1), (i_2,j_1), (i_2, j_2), \ldots, (i_{n}, j_{n}), (i_1, j_{n})\}$ and call such a collection of index pairs an $H$-{\em cycle}. Note that $H(I, J)=H(I', J')$ where $I'=(i_2, i_3,\ldots,i_n, i_1)$ and $J'=(j_2, j_3,\ldots,j_n, j_1)$. Likewise, $H(I, J)$ remains unchanged if we reverse the order of the $i_{\nu}$ and the $j_{\nu}$. Consequently, the number of $H$-cycles is $\frac 12 n! (n-1)!$.

\begin{definition}
Let $n$ be an even integer. A double Latin square is an 
$n \times n$ matrix with entries from $\{1, ... ,\frac{n}{2}\}$ 
where each symbol appears exactly \textbf{twice} in each row and column.
\end{definition}

We say that a double Latin square $X$ is \textit{Hamiltonian} if the indices of the $k$-entries of $X$ constitute an $H$-cycle for every $k \in \{1, ... ,\frac{n}{2}\}$. (This explains the choice of the term $H$-cycle).

Let $A$ be a $t \times t$ matrix and let $\sigma \in S_t$ 
be a permutation. We denote by $\sigma(A)$ the matrix obtained from $A$ by applying $\sigma$ to its rows. We need the following result from~\cite{DLS}:

\begin{proposition}
Let $A,B$ be two order $\frac{n}{2}$ Latin squares and let 
$\sigma \in \mathbb{S}_{\frac{n}{2}}$ be a cyclic permutation. Then the block matrix
\[  X = \left( \begin{array}{ccc} 
 A &\vline & B \\   
 \hline
 \sigma(A) & \vline &  B  \\   
 \end{array} \right).
\]
is a Hamiltonian double Latin square.
\end{proposition}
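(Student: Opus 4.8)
The plan is to verify two separate things: that $X$ is a double Latin square, and that it is Hamiltonian. The first part is immediate. Each row of $X$ is either a row of $A$ concatenated with a row of $B$, or a row of $\sigma(A)$ concatenated with a row of $B$; in either case, since $A$ and $\sigma(A)$ are order-$\frac n2$ Latin squares (applying a permutation to the rows of a Latin square leaves a Latin square) and $B$ is too, each symbol in $\{1,\dots,\frac n2\}$ appears exactly once in the left half and once in the right half of that row, hence exactly twice in the row. For columns: the first $\frac n2$ columns of $X$ are columns of $A$ stacked on columns of $\sigma(A)$. Since $\sigma$ merely permutes which row of $A$ lands in which position, the multiset of entries in column $j$ of the bottom block $\sigma(A)$ equals the multiset of entries in column $j$ of $A$, which is all of $\{1,\dots,\frac n2\}$; so column $j$ of $X$ contains each symbol exactly twice. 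The last $\frac n2$ columns of $X$ consist of a column of $B$ stacked on another column of $B$, so again each symbol appears exactly twice. Thus $X$ is a double Latin square.

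The substance is in showing $X$ is Hamiltonian, i.e.\ that for each fixed symbol $k$, the set of $n$ cells of $X$ equal to $k$ forms an $H$-cycle $H(I,J)$ for suitable permutations $I,J$ of $[n]$. Fix $k$. In $A$ there is a unique cell $(a_r, r)$ in each column $r$ of the left half (here I index so that $a_r$ is the row of $A$ containing $k$ in column $r$, and $r\mapsto a_r$ is a permutation of $[\frac n2]$), and correspondingly in the bottom-left block $\sigma(A)$ the symbol $k$ sits in row $\sigma(a_r) + \frac n2$ of $X$, column $r$. In the right half, $B$ contributes a cell $(b_s, s+\frac n2)$ in the top block and $(b_s+\frac n2, s+\frac n2)$ in the bottom block, where $s\mapsto b_s$ is the column-to-row permutation for symbol $k$ in $B$. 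So the $k$-cells of $X$, as row/column index pairs, are exactly the $2\cdot\frac n2 = n$ pairs coming from these four blocks. The claim is that these pairs chain together into a single $n$-cycle in the "$H$-graph" — the graph on these cells where two cells are joined if they share a row or a column.

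Here is the structure of that $H$-graph. Within each of the four blocks the $k$-cells form a partial permutation matrix, so no two of them share a row or column inside a block. Two cells in the two left blocks (top-left from $A$, bottom-left from $\sigma(A)$) share a column $r$ exactly when they come from the same column $r$; this gives a perfect matching $M_1$ between the $A$-cells and the $\sigma(A)$-cells. Similarly the two right blocks are matched column-by-column via $B$, giving $M_2$. The remaining adjacencies are row adjacencies, which can only occur between a left block and a right block sharing the same row of $X$: a top-left $A$-cell in row $a_r$ is row-adjacent to the top-right $B$-cell in that same row, and a bottom-left $\sigma(A)$-cell in row $\sigma(a_r)+\frac n2$ is row-adjacent to the bottom-right $B$-cell in that row. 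These row adjacencies form two more perfect matchings $M_3$ (top blocks) and $M_4$ (bottom blocks). So the $H$-graph on the $n$ $k$-cells is a union of four perfect matchings that together make every vertex have degree $2$ (each cell meets exactly one column-partner and one row-partner), hence it is a disjoint union of even cycles; being an $H$-cycle is equivalent to this union being a \emph{single} $n$-cycle. Tracing the alternating path — from a top-left cell along its column ($M_1$) to a bottom-left cell, along its row ($M_4$) to a bottom-right cell, along its column ($M_2$) to a top-right cell, along its row ($M_3$) back to a top-left cell — shows that one step of this four-move loop corresponds to the permutation $r\mapsto \sigma^{-1}(\pi(\sigma(r)))$ or something of that shape, where the $B$-part contributes an \emph{even} permutation-style shift that the composition with the rest reduces to a power of $\sigma$. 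The point is that, because $\sigma$ is an $\frac n2$-cycle, the relevant permutation generating the cycle structure is conjugate to (a power coprime to $\frac n2$ of) $\sigma$, hence itself a single $\frac n2$-cycle, which makes the $H$-graph a single $n$-cycle.

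The main obstacle, and the step to execute carefully, is exactly that last bookkeeping: setting up the index conventions for the four column-to-row permutations ($a$ for $A$, $\sigma\circ a$ for $\sigma(A)$, $b$ for $B$ in both right blocks) so that the composition of the four matchings around the loop is transparently a single cycle. Concretely, I would pick a starting $k$-cell in the top-left block, follow $M_1, M_4, M_2, M_3$ back to the top-left block, read off the induced map on $[\frac n2]$ (the set of left-half columns), and check it equals a conjugate of $\sigma$ — the $A$ and $B$ contributions cancel because we traverse each of them once forward and effectively once backward, leaving a net shift by $\sigma$ only. Since a conjugate of an $\frac n2$-cycle is an $\frac n2$-cycle, this map is transitive on $[\frac n2]$, so starting from any left-half column we reach all $\frac n2$ left-half columns before returning, visiting all $n$ cells; thus the $k$-cells form one $H$-cycle. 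As $k$ was arbitrary, $X$ is Hamiltonian, completing the proof.
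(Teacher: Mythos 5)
The paper itself gives no proof of this proposition --- it is cited directly from Hilton--Mays--Nash-Williams--Rodger \cite{DLS} --- so there is no in-paper argument to compare against; I assess your proposal on its own merits.

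Your overall structure is sound: verify the double-Latin-square property row-by-row and column-by-column (correct), then for each symbol $k$ analyze the ``$H$-graph'' on the $k$-cells as a union of four matchings between pairs of blocks, alternating column-matchings and row-matchings, and show that the four-step composition around the blocks is a conjugate of $\sigma$, hence a single $\frac n2$-cycle, hence the $H$-graph is one cycle. This is a valid and rather clean route to the conclusion.

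Two repairs are needed. First, a counting slip: a double Latin square of order $n$ contains the symbol $k$ twice in each of the $n$ rows, i.e.\ $2n$ times; the four $\frac n2\times\frac n2$ blocks contribute $\frac n2$ $k$-cells each, for $4\cdot\frac n2 = 2n$ in total, and the $H$-cycle $H(I,J)$ defined in the paper visits $2n$ cells, not $n$. You write ``$2\cdot\frac n2=n$'' and later ``single $n$-cycle''; both should read $2n$. This does not break the argument but must be fixed. Second, the decisive computation is left as a plan (``I would pick a starting $k$-cell $\ldots$ or something of that shape''). Carried out: with $\alpha(j)$ the $A$-row such that $A(\alpha(j),j)=k$, $\beta(i)$ the $B$-column such that $B(i,\beta(i))=k$, and $\sigma(A)(i,j)=A(\tau(i),j)$ for $\tau\in\{\sigma,\sigma^{-1}\}$ (either convention), tracing $M_1, M_4, M_2, M_3$ from the top-left cell in column $j$ returns to the top-left cell in column $\alpha^{-1}\tau^{-1}\alpha(j)$. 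The $B$-contribution via $\beta$ cancels exactly (down a $B$-column and back along the matching $B$-row), while the $A$-contribution conjugates rather than cancels --- so your phrase ``the $A$ and $B$ contributions cancel, leaving a net shift by $\sigma$ only'' is loose, but your stated conclusion (a conjugate of $\sigma$) is exactly right. A conjugate of an $\frac n2$-cycle is an $\frac n2$-cycle, so the $H$-graph is a single $2n$-cycle and $X$ is Hamiltonian. With those two points made explicit, the proof is complete.
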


It follows that the number of Hamiltonian order-$n$ double Latin squares
is at least $(\frac{n}{2} -1)! \cdot L_{\frac{n}{2}}^2 = ((1+o(1))\frac{n}{2 e^2})^{\frac{n^2}{2}}$.

We want to construct a tristochastic array $A$ with exactly two 
$\frac{1}{2}$'s in each line, in such a way that 
$G(A)$ is non-bipartite and connected (and therefore $A$ is a vertex).

The idea is to use a Hamiltonian double Latin square $X$ to define the top $\frac{n}{2}$ layers of $A$. We use the fact that $X$ is Hamiltonian to complete $A$ in such a way that $G(A)$ is connected, and then ``plant" an odd cycle in $G(A)$ to ensure that $G(A)$ isn't bipartite.

Given a Hamiltonian double Latin square $X$, we use it as the ``topographical map" of the top $\frac{n}{2}$ layers of $A$. Namely, $A(i,j,k) = \frac{1}{2} \Leftrightarrow X(i,j)=k$. 
Let us observe the subgraph of $G(A)$ spanned by the entries of $A$ that reside in 
these top layers. Every positive entry $A(i,j,k) = \frac{1}{2}$ comes from $X(i,j)=k$, and 
$X$ has exactly two $k$ entries in each line.
Therefore this subgraph of $G(A)$ is $2$-regular. Moreover, since $X$ is Hamiltonian, for every 
$1 \leq k \leq \frac{n}{2}$ the vertices of $G(A)$ that correspond to $\mbox{supp}(A_k)$ constitute a cycle of length $2n$. In other words, the subgraph of $G(A)$ corresponding to the entries of the top half of $A$ is the disjoint union of $\frac{n}{2}$ cycles of length $2n$.

At this point, there are two $\frac{1}{2}$ entries in every line that resides in one of the 
top $\frac{n}{2}$ layers of $A$, and a single $\frac{1}{2}$ entry in every shaft.

We turn to define the next layer, $A_{\frac{n}{2}+1}$. Our purpose is to choose the $\frac 12$ entries in this layer so as to form a single cycle of length $2n$. The vertices of this subgraph should also be connected to each of the cycles in the top $\frac{n}{2}$ layers. Clearly, if we manage to accomplish this task, then the part of $G(A)$ that is already revealed is connected. Furthermore, note that every shaft contains a positive entry in the top half of $A$. Therefore, $G(A)$ will remain connected regardless of our choices in the lower layers of $A$.

In order to achieve our goals concerning $A_{\frac{n}{2}+1}$, we want to find $\frac{n}{2}$ index pairs 
$(i_1,j_1), ... ,(i_{\frac{n}{2}},j_{\frac{n}{2}})$ 
such that $X(i_l,j_l)=l$ for all $1\leq l\leq \frac{n}{2}$ and no two of them share a row or a column.  
We find such pairs successively as follows: Suppose that, for some $k< \frac{n}{2}$, we already have $k$ pairs
$(i_1,j_1),...,(i_k,j_k)$ with 
$X(i_1,j_1) = 1, ... ,X(i_k,j_k) = k$ and no two pairs share a row or column. 
We claim that there is an additional pair $(i_{k+1},j_{k+1})$ that does not
share a row or column with any of the above index pairs, and
$X(i_{k+1},j_{k+1})=k+1$. Since $X$ is a double Latin square, every row and column of $X$ has exactly two elements that equal $k+1$. Therefore at most $4k$ of these entries share a row or column with a previous pair. But $2n > 4k$, so that such an index pair $(i_{k+1},j_{k+1})$ must exist.

We choose $\frac{n}{2}$ more pairs of indices $(i_{\frac{n}{2}+1},j_{\frac{n}{2}+1}), ... ,(i_n,j_n)$ in such a way that no two pairs of $(i_1,j_1),...,(i_n,j_n)$ share a row or a column. 

It is possible to rename, if necessary, the set of chosen pairs $\{(i_{\alpha},j_{\alpha})|\alpha=1,\ldots,n\}$ as $\{(\nu,\tau_{\nu})|\nu = 1,\ldots,n\}$ for some permutation $\tau \in S_n$. Let $P$ be the permutation matrix of $\tau$. We next select a permutation $\sigma \in S_n$ whose permutation matrix $P'$ is such that $P + P'$ consists of a single cycle. (We note that given $\tau$, there are exactly $(n-1)!$ possible choices for $\sigma$).
We achieve our aim by setting $A_{\frac n2 +1} := \frac{1}{2}(P + P')$.

The purpose of our choices for $A_{\frac{n}{2}+2}$ is to introduce an odd 
cycle into $G(A)$. This odd cycle must use elements from the top half of $A$.
Additionally, the indices of the $\frac{1}{2}$ entries in $A_{\frac{n}{2}+2}$ must avoid all index pairs used in $A_{\frac{n}{2}+1}$, so as not to create a shaft with three $\frac{1}{2}$ entries.

To this end, we seek two vertices $x=(x_1,x_2,k)$ and $y=(y_1,y_2,k)$ with $x_1 \neq y_1$ and $x_2 \neq y_2$ that are connected by a path of odd length in the part of $G(A)$ constructed so far. The construction of $A_{\frac{n}{2}+2}$ will yield a length four path between $x$ and $y$, ensuring that $G(A)$ is not bipartite. This path will have the form $x,x',w,y',y$ where $x'=(x_1,x_2,\frac{n}{2}+2), y'=(y_1,y_2,\frac{n}{2}+2)$ and $w$ is either $(x_1,y_2,\frac{n}{2}+2)$ or $(y_1,x_2,\frac{n}{2}+2)$.

A simple counting argument shows the feasibility of this construction. Two vertices from the same layer can serve as $x$ and $y$ if their distance in that layer is odd and $\ge 3$. There are $\Omega(n^2)$ such pairs in every layer with a total of $\Omega(n^3)$ such candidate pairs. On the other hand, as we show below, only $O(n^2)$ such pairs are ruled out, so at least for large $n$ a good choice of such $x, y$ must exist. 

The reason that an entry cannot play the role of $x$ is that its shaft meets $\mbox{supp}(A_{\frac{n}{2}+1})$. There are $O(n)$ vertices in $x$'s layer which might serve as $y$, and $\mbox{supp}(A_{\frac{n}{2}+1})$ has cardinality $2n$, so only $O(n^2)$ pairs $x, y$ get ruled out for this reason. It remains to see how the pair $x=(x_1,x_2,k)$ and $y=(y_1,y_2,k)$ can be disqualified when both $x$'s and $y$'s shaft do not meet $\mbox{supp}(A_{\frac{n}{2}+1})$. This can happen only if both $(x_1,y_2,\frac{n}{2}+2)$ and $(y_1,x_2,\frac{n}{2}+2)$ are unavailable to us, namely $A(x_1,y_2,\frac{n}{2}+1) = A(y_1,x_2,\frac{n}{2}+1) = \frac 12$. There are only $O(n^2)$ such instances, one per each pair of vertices in the $2n$-cycle residing in $A_{\frac{n}{2}+1}$.

By doing these computations carefully, one shows that already for $n \geq 10$ there must exist a good pair for the above argument.  

Next we need to complete $\mbox{supp}(A_{\frac{n}{2}+2})$. We are currently committed to three elements and $2n-3$ more $\frac{1}{2}$ entries need to be chosen, so that altogether there are exactly two in each row and column. The locations that must not be chosen are those in the ``shadow" of $\mbox{supp}(A_{\frac{n}{2}+1})$. It is easily seen that we need the following simple graph-theoretic claim.

\begin{proposition}
Let $G=(L,R,E)$ be a $(n-2)$-regular bipartite graph with $|R|=|L|=n \ge 6$ and let $M$ be a path of length $3$ in $G$. Then there is a $2$-factor in $G$ which contains the three edges of $M$.
\end{proposition}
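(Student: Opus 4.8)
The plan is to realize the required $2$-factor as an edge-disjoint union of two perfect matchings of $G$. Since $G$ is bipartite and $M$ has odd length $3$, two of the four vertices of $M$ lie in $L$ and two in $R$; write $M$ as $a_1,b_1,a_2,b_2$ with $a_1,a_2\in L$ and $b_1,b_2\in R$, so that its three edges are $e_1=a_1b_1$, $e_2=a_2b_1$ and $e_3=a_2b_2$, where $a_1\ne a_2$ and $b_1\ne b_2$. First I would produce a perfect matching $N_1$ of $G$ containing the two \emph{disjoint} edges $e_1$ and $e_3$, and then a perfect matching $N_2$ of $G$ containing $e_2$ and edge-disjoint from $N_1$. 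Since $N_1$ and $N_2$ are edge-disjoint perfect matchings, $N_1\cup N_2$ is a simple $2$-regular spanning subgraph of $G$, i.e.\ a $2$-factor, and it contains $e_1,e_2,e_3$, which is what we want.

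To get $N_1$, delete the four vertices $a_1,b_1,a_2,b_2$ from $G$, obtaining a bipartite graph $G'$ with parts of size $m:=n-2\ge 4$; since only two vertices were removed from the side opposite any given vertex, the minimum degree of $G'$ is at least $n-4=m-2$. I would check Hall's condition for $G'$ directly: for a set $S$ on one side, if $|S|\le m-2$ then $|N_{G'}(S)|\ge m-2\ge|S|$; if $|S|=m-1$, a deficiency $|N_{G'}(S)|\le m-2$ would force each of the $\ge 2$ non-neighbours on the other side to have all of its $\ge m-2\ge 2$ neighbours inside a set of size $m-(m-1)=1$, which is impossible; and if $|S|=m$ then every vertex on the other side has positive degree, hence a neighbour in $S$. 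Thus $G'$ has a perfect matching $N'$, and $N_1:=N'\cup\{e_1,e_3\}$ is a perfect matching of $G$ through $e_1$ and $e_3$.

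To get $N_2$, note first that $e_2\notin N_1$, since in $N_1$ the vertex $b_1$ is matched to $a_1\ne a_2$. Hence $e_2$ is an edge of $G-N_1$, the graph obtained by deleting the edges of $N_1$, which is $(n-3)$-regular and bipartite (each vertex loses exactly one edge). By K\"{o}nig's edge-colouring theorem a regular bipartite graph of positive degree is a disjoint union of perfect matchings; let $N_2$ be the one among these that contains $e_2$. Then $N_2$ is a perfect matching of $G$ with $e_2\in N_2$ and $N_2\cap N_1=\emptyset$, completing the construction.

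Everything here is elementary; the one place the hypothesis $n\ge 6$ is used is the Hall-condition check for $G'$, so the real (and very mild) obstacle is precisely the statement that in an $(n-2)$-regular bipartite graph on $n+n$ vertices with $n\ge 6$ any two disjoint edges extend to a perfect matching — which is exactly what the case analysis above establishes, and it remains valid down to the boundary value $n=6$, where $G'$ has parts of size $4$ and minimum degree $2$. The remaining ingredients (extending two disjoint edges to a perfect matching, and the decomposition of a regular bipartite graph into perfect matchings) are standard.
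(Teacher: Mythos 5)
Your proof is correct and follows the same overall strategy as the paper's: both first match up the $n-2$ vertices off the path (your $N'$, the paper's $\Phi$), add the two outer path edges to make a perfect matching $N_1 = \Phi\cup\{e_1,e_3\}$, and then find a second, edge-disjoint perfect matching through the middle edge $e_2$. Where you differ is in how you produce the second matching. The paper deletes the two middle path vertices $x_2,x_3$ and the edges of $\Phi$, and invokes the same Hall-type bound (sides of size $k$, degrees $\ge k/2$) once more on the resulting $(n-1)\times(n-1)$ graph; you instead observe that $G-N_1$ is $(n-3)$-regular and bipartite and invoke K\"onig's edge-colouring theorem to decompose it into perfect matchings, one of which must contain $e_2$. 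Your route is slightly cleaner for the second step: the paper's second Hall application requires minimum degree $\ge (n-1)/2$ in a graph whose minimum degree is only guaranteed to be $n-4$, which is tight (indeed slightly short) at $n=6$, whereas the K\"onig argument works verbatim for every $n\ge 6$ with no boundary concerns. Both approaches are elementary and essentially the same in spirit; yours packages the finishing step a bit more robustly.
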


\begin{proof}
Let $M=x_1, x_2, x_3, x_4$. A bipartite graph with sides of size $k$ and degrees $\ge k/2$ has a perfect matching. Let $\Phi$ be a perfect matching in $G \setminus \{x_1, x_2, x_3, x_4\}$. Next let $\Psi$ be a perfect matching in $G \setminus \{x_2, x_3\} \setminus \Phi$. The desired $2$-factor is $\Phi \cup \Psi \cup \{(x_1, x_2),  (x_2, x_3), (x_3, x_4)\}$.
\end{proof}

To recap, the graph $G(A)$ is connected, it contains an odd cycle, and these properties are retained regardless of how the remaining $\frac{n}{2}-2$ layers are completed.

The remaining layers are constructed as follows. 
Let $K$ be an $n \times n$ matrix where $K(i,j) = 1$ or $0$ according to whether the shaft $A(i,j,\cdot)$ has one or two $\frac{1}{2}$ entries.
Each row and column of $K$ has $n-4$ one-entries. In other words, $K$ is the adjacency matrix of an $(n-4)$-regular bipartite graph which, therefore, has a $2$-factor. This process can be completed layer by layer. This is just an existential argument and we next turn to estimate the number of ways in which our construction can be realized.

To this end we will multiply the number of ways to construct the 
top half and the appropriate number for the bottom half. As stated above, there are $L_n^{\frac 12 +o(1)}$ ways to construct the top half. 
The estimate for the bottom $\frac{n}{2}-2$ layers is a slight variation on van Lint and Wilson's~\cite{VL+W} approximate enumeration of Latin squares. By the van der Waerden bound~\cite{Fal, Eg}, a $k$-regular $(n,n)$ bipartite graph $H$ has at least $\left((1+o(1)\frac ke\right)^n$ perfect matchings. By the same argument, there are at least $\left((1+o(1)\frac{k-1}e\right)^n$ ways to complete a perfect matching in $H$ to a $2$-factor. The product of these two numbers is an overcount, since every cycle in the $2$-factor can be split in two ways between the first and second $1$-factors. Consequently, $H$ has at least
\[
\left((1+o(1))\frac{k(k-1)}{e^2\sqrt 2}\right)^n
\]
$2$-factors. 

We think of $K$ as the adjacency matrix of such an $H$, and each layer is just a $2$-factor supported by $K$. With each choice, the edges of the chosen $2$-factor are removed from $H$, which goes from being $d$-regular to $(d-2)$-regular. This yileds the following lower bound on the number of choices:
$$ \prod_{2\leq k \leq n-4, ~k \small{\mbox{~is even}}}{\left((1+o(1))\frac{k(k-1)}{e^2\sqrt 2}\right)^n} = $$
$$(n-4)!^{n} \cdot \left(\frac{1+o(1)}{e^2\sqrt 2}\right)^{n(n-4)/2} = \left( (1+o(1)) \frac{n}{2^{\frac{1}{4}}e^2}  \right)^{n^2} = L_n^{1-o(1)}.$$

The product of the bound for the top half and the bound for the bottom half yields a total of 
$L_n^{\frac{3}{2}-o(1)}.$

\section{A variation on the theme}\label{variation}

Here is another natural extension of the notion of doubly stochastic matrices. Namely, let $\Sigma_n^{(d)}$ to be the set of all $[n]^{d+1}$ arrays of nonnegative reals such that the entries in each coordinate hyperplane sum to one. The collection of such arrays clearly constitutes a convex polytope. Our goal is to investigate the vertex set of this polytope. Let us define $T_n^{(d)}$ as the collection of all $[n]^{d+1}$ arrays of zeros and ones with a single one in each coordinate hyperplane. It is clear that $T_n^{(d)}$ is included in the vertex set of $\Sigma_n^{(d)}$. 
In view of the Birkhoff-von Neumann theorem it is natural to see how many of these vertices belong to $T_n^{(d)}$.

There is a natural bijection between tuples $(\sigma_1, ... ,\sigma_d) \in \mathbb{S}_n^d$ and members $A \in T_n^{(d)}$ which is given by 
$ A(i,\sigma_1(i), ... ,\sigma_d(i)) = 1 $ for all $1 \leq i \leq n$.
In particular $|T_n^{(d)}| = (n!)^d$. 

As mentioned, every member of $T_n^{(d)}$ is a vertex of $\Sigma_n^{(d)}$, and we ask whether this polytope has any additional vertices. As it happens, such vertices are easy to construct. Here is the smallest example:

\[ 
A_1 = \left[\begin{array}{cc}
\frac{1}{2} & 0  \\
0 & \frac{1}{2}  \\
\end{array}\right],
A_2 = \left[\begin{array}{cc}
0 & \frac{1}{2}  \\
\frac{1}{2} & 0  \\
\end{array}\right]
\]

Clearly $A \in \Sigma_2^{(2)}$. We now consider the graph $\bar{G}(A)$ 
with vertex set $\mbox{supp}(A)$ with an edge between every two vertices that lie in the same coordinate hyperplane. As in Section~\ref{basic}, we show that $A$ is a vertex by observing that $\bar{G}(A)$ has no bipartite connected component. In the present case, $\bar{G}=K_4$.

Our general construction is similar in nature to this example. We first construct an $n \times n$ matrix $M$ with entries from $[n]$ in which every row and column contains exactly two nonzero entries and where each integer in $[n]$ appears exactly twice in $M$. We view $M$ as a way to encode $A$ as follows: $M(i,j)=k$ for some $k \neq 0$ says that $A(i, j, k)=1/2$ and $A(i, j, k')=0$ for all $k' \neq k$. Also $M(i,j)=0$ means that $A(i, j, l)=0$ for all $l$. It is not hard to verify that if the graph $\bar G$ corresponding to $M$ is connected and non-bipartite, then $A$ is a vertex of $\Sigma_n^{(2)}$.

We now turn to construct many such matrices $M$ and thus generate many vertices for $\Sigma_n^{(2)}$ which are not in $T_n^{(2)}$. Let $$H=\{(i_1,j_1), (i_2,j_1), (i_2, j_2), \ldots, (i_{n}, j_{n}), (i_1, j_{n})\}$$ be an $H$-cycle and let
$$ M(i_1,j_1) =  M(i_2,j_2) = 1\mbox{~ and~} M(i_2,j_1) = 2 .$$
The remaining entries of the $H$-cycle $M(i_{\alpha}, j_{\alpha})$ and $M(i_{\alpha+1}, j_{\alpha})$ are filled arbitrarily with the elements of the multiset $\{2,3,3,4,4,\ldots,n,n\}$. Note that the resulting graph $\bar{G}$ is connected due to the $H$-cycle. It also contains the triangle $\{(i_1,j_1, 1), (i_2,j_2, 1), (i_2,j_1, 2)\}$. There are $\frac 12 n!(n-1)!$ choices for $H$ and
and $\frac{(2n-3)!}{2^{n-2}}$ ways to map the multiset to the nonzero entries of $M$. Altogether, this construction yields more than $n!^4 > \left(T_n^{(2)}\right)^2$ vertices of $\Sigma_n^{(2)}$. It follows that $T_n^{(2)}$ constitutes a vanishingly small subset of this vertex set.

\section{Conjectures and some experimental results}

This paper raises many open questions. Here are several of them:
\begin{itemize}
\item
Get a better estimate for the number of vertices of $\Omega^{(2)}_n$.
\item
The analogous question for $\Omega^{(d)}_n$ with $d > 2$ seems completely open at this writing.
\item
The polytope $\Omega^{(2)}_n$ is defined by requiring that one-dimensional subsets of the array sum to one. In the definition of $\Sigma_n^{(2)}$ this is required of two-dimensional subsets. For larger $d$ this suggests a whole range of possible polytopes to consider, depending on which sets of entries sum to $1$. 
\end{itemize} 

If we knew the support size of vertices in $\Omega^{(d)}_n$, we could make progress on these questions. By standard linear programming arguments, every vertex of $\Omega^{(d)}_n$ has at least $\mbox{aff-dim}(\Omega^{(d)}_n)$ zero coordinates. Since $\mbox{aff-dim}(\Omega^{(d)}_n) = (n-1)^{d+1}$, every vertex has support size at most $n^{d+1} - (n-1)^{d+1}\le (d+1) \cdot n^{d}$.  

It follows that $\Omega^{(d)}_n$ has at most
$\binom{n^{d+1}}{(d+1) n^{d}}\le \left( \frac{ne}{d+1} \right)^{(d+1) n^{d}}$ 
vertices. In particular, $\Omega^{(2)}_n$ has fewer than 
$ n^{3 n^2}$ vertices. If we knew, say, that a typical vertex
of $\Omega^{(2)}_n$ has support size $\le \alpha n^2$ vertices, we could conclude
that it has at most  $n^{(1+o(1))\alpha n^2}$ vertices.

We have conducted some numerical experiments to get a sense of the numbers. Using
linear programming tools, it is possible to find the vertex that maximizes a randomly chosen linear objective function. Needless to say, this distribution on the vertices is by no means uniform. We nevertheless hope that our experiments do tell us something meaningful about the properties of typical vertices. We selected the coordinates in the objective function independently from normal distribution. The average value of $\alpha$ in these experiments seems to increase slowly with $n$. We don't know whether the typical support size of a vertex converges to $3n^2$ or to $\alpha n^2$ for some $\alpha < 3$.

\end{document}